\documentclass[11pt,letterpaper,reqno]{amsart}
\usepackage{fullpage}
\usepackage{amsmath,amsthm,amsfonts,amssymb,amscd}
\usepackage{empheq}
\usepackage{thmtools}
\usepackage{hyperref}
\usepackage{makecell}
\usepackage{enumerate}
\usepackage{enumitem}
\usepackage{bbm}
\usepackage{array}
\usepackage{float}

\allowdisplaybreaks
\renewcommand{\arraystretch}{1}

\let\subsectiontemp\subsection
\renewcommand{\subsection}[1]{ 
    \subsectiontemp{#1} \hfill\vspace{0.5\linespacing} 
} 

\hypersetup{
  colorlinks=true,
  linkcolor=blue,
  citecolor=blue,
  linkbordercolor={0 0 1}
}

\newtheorem{theorem}{Theorem}[section]

\newtheorem{lemma}[theorem]{Lemma}

\mathtoolsset{showonlyrefs=true}
\numberwithin{equation}{section}
\numberwithin{figure}{section}
\numberwithin{table}{section}

\newcommand{\lrp}[1]{\left(#1\right)}
\newcommand{\lrb}[1]{\left[#1\right]}
\newcommand{\lrcb}[1]{\left\{#1\right\}}

\newcommand{\pttmatrix}[4]{
\left(
\begin{smallmatrix}
  #1 & #2 \\
  #3 & #4
\end{smallmatrix}
\right)
}

\newcommand{\ZZ}{\mathbb{Z}}
\newcommand{\CC}{\mathbb{C}}
\newcommand{\QQ}{\mathbb{Q}}

\newcommand{\SL}{\text{SL}}

\newcommand{\lcm}{\textnormal{lcm}}

\newcommand{\FF}{\mathcal{F}}
\newcommand{\HH}{\mathcal{H}}
\newcommand{\CM}{\textnormal{CM}}
\newcommand{\Or}[1]{\mathcal{O}_{#1}}
\newcommand{\Cl}[1]{\textnormal{Cl}(\mathcal{O}_{#1})}
\newcommand{\PrinCl}[1]{\text{PrinCl}(\mathcal{O}_{#1})}
\newcommand{\PreCl}[1]{\text{PreCl}(\mathcal{O}_{#1})}
\newcommand{\Exp}{\textnormal{Exp}\,}

\def \t {\tau}

\author{David Aiken}
\address[D. Aiken]{Department of Mathematics, Louisiana State University,
Baton Rouge, Louisiana}

\author[E. Ross]{Erick Ross}
\address[E. Ross]{School of Mathematical and Statistical Sciences, Clemson University, Clemson, SC}
\email{erickjohnross@gmail.com}

\author[D. Shvydkoy]{Dmitriy Shvydkoy}
\address[D. Shvydkoy]{Department of Mathematics, University of Illinois Urbana-Champaign,
Urbana, IL}

\author[H. Xue]{Hui Xue}
\address[H. Xue]{School of Mathematical and Statistical Sciences, Clemson University, Clemson, SC}
\email{huixue@clemson.edu}

\keywords{complex multiplication, equidistribution, class groups, exponent of class groups, imaginary quadratic orders}

\subjclass{11G15, 11R65, 11R29}

\title{Boundary CM points and class groups of small exponent}

\begin{document}

\begin{abstract}
Let $\mathcal F$ denote the fundamental domain for $\text{SL}_2(\mathbb{Z})$ on the upper half plane $\mathcal H$.
William Duke showed that as fundamental discriminants $D \to -\infty$, the sets $\textnormal{CM}_{D}$ (CM points of discriminant $D$) are equidistributed in $\mathcal F$. In this paper, we investigate the behavior of CM points on the boundary of $\mathcal F$.
We prove that such CM points are equidistributed on the boundary, and also give a complete characterization of when every $\textnormal{CM}_D$ point lies on the boundary. 
Along the way, we also (conditionally) give a complete classification of 
negative discriminants with class group of small exponent.
\end{abstract}

\maketitle

\section{Introduction}

Let $\HH \coloneq \{z \in \CC : \text{Im } z > 0 \}$ denote the complex upper half plane, and 
\[
    \FF \coloneq  \lrcb{z = x+iy \in \mathcal{H} : |z| \geq 1, -\frac{1}{2} \leq x \leq 0} \cup \lrcb{z=x+iy \in \mathcal{H} : |z| > 1, 0 < x < \frac{1}{2}}
\]
denote the standard fundamental domain for $\SL_2(\ZZ)$ acting on $\HH$. 
For negative discriminants $D$, let $\CM_D$ denote the set of CM points in $\HH$ of discriminant $D$. It turns out that the action of $\SL_2(\ZZ)$ on $\HH$ is also an action on these $\CM_D$ points.  
Hereafter, when we refer to ``discriminants" in this paper, we are only referring to negative discriminants.

In \cite{duke}, Duke investigated how $\CM$ points are distributed over the fundamental domain $\FF$. In particular, he showed that for fundamental discriminants $D$; as $D \to -\infty$, the $\CM_D$ points are equidistributed via the hyperbolic measure $d\mu_{\textnormal{hyp}} = \frac{1}{y^2} \,dx\,dy$.
In this paper, we investigate the behavior of CM points on the boundary of $\FF$. We study this question for all discriminants $D$, not just the fundamental ones.


First, we show that the CM points on the boundary of $\FF$ are equidistributed according to certain metrics. We will parametrize CM points on the left boundary of $\FF$ (i.e. $\{-\frac{1}{2} + iy \ :\ y \ge \frac{\sqrt{3}}{2}\}$) by their imaginary parts $y$, and CM points on the lower arc of $\FF$ (i.e. $\{e^{i\theta} \ :\ \frac{\pi}{2} \le \theta \le \frac{2\pi}{3} \}$) by their arguments $\theta$.
\begin{theorem} \label{thm:equidistribution-boundary}
    Let $\CM^\textnormal{LB}_{|D| \le \Delta}$ denote the CM points on the left boundary of $\FF$ with absolute discriminant $|D| \le \Delta$, and $\CM^\textnormal{LA}_{|D| \le \Delta}$ denote the CM points on the lower arc of $\FF$ with absolute discriminant $|D| \le \Delta$. Then as $\Delta \to \infty$;
    \begin{enumerate}[leftmargin=*]
        \item $\textnormal{Im\,} \CM^\textnormal{LB}_{|D|\le \Delta}$ is equidistributed over every fixed $[\frac{\sqrt 3}{2}, K]$ via the metric $d\mu = \frac{1}{y}dy$,
        \item $\textnormal{arg\,} \CM^\textnormal{LA}_{|D|\le \Delta}$ is equidistributed over $[\frac{\pi}{2}, \frac{2\pi}{3}]$ via the metric $d\nu = \frac{1}{\sin \theta} \,d\theta$.
    \end{enumerate}
\end{theorem}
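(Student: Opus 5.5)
The plan is to translate the statement into a lattice-point count for primitive binary quadratic forms. Each point of $\CM_D\cap\FF$ is $\tau_Q=\frac{-b+\sqrt{D}}{2a}$ for exactly one reduced primitive form $Q=[a,b,c]$ with $b^2-4ac=D$. I would first identify which reduced forms give boundary points:
\begin{itemize}
\item The left boundary $\textnormal{Re}\,\tau=-\frac12$ corresponds to $b=a$, with $c\ge a$. Primitivity is then $\gcd(a,c)=1$, and $|D|=a(4c-a)$.
\item The lower arc $|\tau|=1$ with $\textnormal{Re}\,\tau\in[-\frac12,0]$ corresponds to $c=a$ and $0\le b\le a$. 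Primitivity is $\gcd(a,b)=1$, and $|D|=4a^2-b^2$.
\end{itemize}
Since distinct forms give distinct points, counting boundary CM points with $|D|\le\Delta$ and position in a prescribed range is the same as counting coprime integer pairs in an explicit planar region. The corner $\rho$ and the point $i$ are single points and are negligible.

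\textbf{Left boundary.} Here $y^2=\frac{4c-a}{4a}=t-\frac14$ with $t=c/a$. So $y\in[\alpha,\beta]$ exactly when $(a,c)$ lies in the sector $t\in[\alpha^2+\frac14,\beta^2+\frac14]$ and in addition $a^2(4t-1)\le\Delta$. This region $R_\Delta(\alpha,\beta)$ is a sector truncated by a curve; it is $\sqrt\Delta$ times a fixed region with piecewise smooth boundary. Writing $c=at$, its area is
\[
\int_{\alpha^2+1/4}^{\beta^2+1/4}\int_0^{\sqrt{\Delta/(4t-1)}} a\,da\,dt=\int \frac{\Delta}{2(4t-1)}\,dt=\int_\alpha^\beta \frac{\Delta}{4y}\,dy,
\]
using $4t-1=4y^2$ and $dt=2y\,dy$.

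\textbf{Lower arc.} Set $s=b/a=-2\cos\theta$. The region is $s\in[s_1,s_2]\subset[0,1]$ with $a^2(4-s^2)\le\Delta$. Its area is
\[
\int\frac{\Delta}{2(4-s^2)}\,ds=\int_{\theta_1}^{\theta_2}\frac{\Delta}{4\sin\theta}\,d\theta,
\]
using $4-s^2=4\sin^2\theta$ and $ds=2\sin\theta\,d\theta$.

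\textbf{Coprime count.} In both cases I would count coprime pairs by M\"obius inversion:
\[
\#\{(m,n)\in R\cap\ZZ^2:\gcd(m,n)=1\}=\sum_{d}\mu(d)\,\#\bigl(\tfrac1d R\cap\ZZ^2\bigr).
\]
The standard lattice-point estimate for the scaled regions $\frac1dR$ gives $\operatorname{area}(R)/d^2+O(\sqrt\Delta/d+1)$. This holds because the boundary length of $\frac1dR$ is $O(\sqrt\Delta/d)$, and the sum only runs over $d\ll\sqrt\Delta$ since $a\ll\sqrt\Delta$ in $R$. Summing over $d$ yields
\[
\frac{6}{\pi^2}\operatorname{area}(R)+O(\sqrt\Delta\log\Delta).
\]
Hence the number of boundary CM points with $|D|\le\Delta$ in $[\alpha,\beta]$ (respectively $[\theta_1,\theta_2]$) is $\frac{3\Delta}{2\pi^2}\int_\alpha^\beta\frac{dy}{y}+O(\sqrt\Delta\log\Delta)$, and similarly with $\frac{d\theta}{\sin\theta}$. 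Dividing by the corresponding total over $[\frac{\sqrt3}{2},K]$ (respectively $[\frac\pi2,\frac{2\pi}3]$), which is of exact order $\Delta$, gives both equidistribution statements. The error terms are uniform over subintervals, so the convergence holds for all subintervals and hence, by the usual approximation argument, for continuous test functions.

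\textbf{Main obstacle.} The step needing the most care is the uniform error term in the M\"obius-weighted lattice count. I must check that the regions are uniformly well behaved: the boundary length of $\frac1dR$ is $O(\sqrt\Delta/d)$, and the number of lattice points is $O(1)$ once $d$ exceeds the diameter. On the left boundary this requires $\alpha\ge\frac{\sqrt3}{2}>0$. On the arc, $\sin\theta\ge\frac{\sqrt3}{2}$ keeps everything bounded. For the left boundary one must also keep $K$ fixed. Without this the total mass $\int^{K}\frac{dy}{y}$ diverges like $\log K$; on all of $[\frac{\sqrt3}{2},\infty)$ the points do not equidistribute to a probability measure, which is why the statement restricts to $[\frac{\sqrt3}{2},K]$.
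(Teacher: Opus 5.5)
Your proof is correct, and it differs from the paper's in two respects.

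First, the paper does not treat the arc separately. It applies $z\mapsto -1/(z+1)$, which carries the lower arc onto $\{-\frac12+iy:\frac12\le y\le\frac{\sqrt3}{2}\}$ and turns $d\nu$ into $d\mu$. On forms this is $[a,b,a]\mapsto[2a-b,2a-b,a]$, a unimodular substitution preserving $|D|$ and primitivity. Both parts thus become a single count of primitive $[a,a,c]$ with $c/a\in[X,Y]\subseteq[\frac12,K^2+\frac14]$, and your two parallel area computations are the two halves of that one computation.

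Second, the counting mechanism differs. The paper slices by $a$. For each $a$ it counts $c$ coprime to $a$ in the interval $aX\le c\le\min(aY,\frac{\Delta}{4a}+\frac a4)$, with error $O(a^{1/4})$. It then evaluates $\sum\phi(a)$ and $\sum\phi(a)/a^2$ over two ranges of $a$, split according to whether the line $c=aY$ or the hyperbola is the binding constraint. This yields an error of $O(\Delta^{5/8})$. You instead apply M\"obius inversion to the whole planar region and use an area-plus-perimeter lattice count on the homothetic regions $\frac{\sqrt\Delta}{d}R_1$.

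Your route is shorter, bypasses the three totient lemmas, and gives the sharper error $O(\sqrt\Delta\log\Delta)$. Its only extra input is a lattice-point estimate for these hyperbola-bounded, non-convex regions. That estimate is standard, for example by counting the unit cells that meet a boundary of length $O(\sqrt\Delta/d)$. The paper's route needs nothing beyond counting integers in intervals, and its change of variables explains why $d\mu$ and $d\nu$ are the same measure.
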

We make two remarks here about this result.
\begin{enumerate}
    \item 
    This equidistribution theorem differs  from Duke's result in that we are considering $\CM_{D}$ points for all discriminants $|D| \le \Delta$, instead of for each individual discriminant $D$. This is necessary because for individual discriminants $D$, the $\CM_D$ points do not even become dense along the boundary of $\mathcal F$ (for example, there is only one $\CM_D$ point on the boundary of $\FF$ for negative prime discriminants $D$).
    \item 
    The two parts of Theorem \ref{thm:equidistribution-boundary} can actually be viewed as the same phenomenon.
    The two metrics $d\mu$ and $d\nu$ are precisely the restriction of the hyperbolic metric $ds=\frac{\sqrt{dx^2+dy^2}}{y}$ to the left boundary of $\FF$ and to the lower arc of $\FF$, respectively. And in fact, we will prove both parts of Theorem \ref{thm:equidistribution-boundary} in a unified manner (by transforming the lower arc of $\FF$ into the vertical line segment $\{-\frac{1}{2}+iy : \frac{1}{2} \le y \le \frac{\sqrt 3}{2}\}$). 
\end{enumerate}

Next, in Theorem \ref{thm:all-CMpts-on-bdry}, we give a classification of when all the $\CM_D$ points in $\FF$ lie on the boundary of $\FF$. This theorem utilizes the fact that the $\CM_D$ points lying on the boundary of $\FF$ are precisely those of order dividing $2$ (by Lemma \ref{lem:location-of-order-2-CM-pts}). 

\begin{theorem} \label{thm:all-CMpts-on-bdry}
    Every $\CM_D$ point contained in $\FF$ lies on the boundary of $\FF$ if and only if $\Cl{D}$ has exponent dividing $2$ and $D$ is odd or $-4$.

    Conditionally on the non-existence of Siegel zeros, such discriminants $D$ are precisely those given in the following table. Unconditionally, this table could possibly also include the discriminants arising from one additional fundamental discriminant.

    \begin{center}
    \def\arraystretch{1.4}
    \begin{tabular}{ c | c } 
         Class Group & Discriminants \\ 
         \hline
         $\{\textnormal{e}\}$ &  $-3, -4, -7, -11, -19, -27, -43, -67, -163$\\  
         $\ZZ/2\ZZ$ &  $-15, -35, -51, -75, -91, -99, -115, -123, -147, -187, -235, -267, -403, -427$\\  
         $(\ZZ/2\ZZ)^2$ & $-195, -315, -435, -483, -555, -595, -627, -715, -795, -1435$ \\  
         $(\ZZ/2\ZZ)^3$ & $-1155, -1995, -3003, -3315$ \\
    \end{tabular}
    \end{center}
    \vspace{3mm}
\end{theorem}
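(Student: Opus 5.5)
The proof has two parts: the equivalence, and the classification table.

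\emph{The equivalence.} I would work with reduced binary quadratic forms $[a,b,c]$ of discriminant $D=b^2-4ac$, with $|b|\le a\le c$ and $b\ge 0$ whenever $|b|=a$ or $a=c$. These correspond bijectively to the $\CM_D$ points in $\FF$ via $\tau=\frac{-b+\sqrt{D}}{2a}$. This correspondence also includes the non-primitive forms, since $\CM_D$ consists of all CM points of discriminant $D$.

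A reduced point lies on the boundary of $\FF$ exactly when $b=a$ (left line $x=-\frac12$) or $a=c$ (lower arc). Such forms are ambiguous, hence have order dividing $2$ in the class group; this is the content of Lemma \ref{lem:location-of-order-2-CM-pts}. So if every primitive point is on the boundary, every class has order dividing $2$. Conversely, if the exponent divides $2$, Lemma \ref{lem:location-of-order-2-CM-pts} puts every primitive point on the boundary. It remains to handle the parity condition and the non-primitive points.

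\emph{Role of parity.} An order-$2$ class can also be represented by a form with $b=0$. Such a form sits on the imaginary axis, in the interior of $\FF$ unless $a=c$. For even $D$ there is always the principal form $[1,0,-D/4]$, giving the point $\sqrt{D}/2=i\sqrt{|D|}/4$. This point is interior unless $|D|=4$. Thus even $D\neq-4$ must be excluded.

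For odd $D$, $b$ is odd, so $b\neq 0$. Every ambiguous reduced form then has $b=a$ or $a=c$, so it lies on the boundary. I would cite Lemma \ref{lem:location-of-order-2-CM-pts}, or re-derive this from the standard characterization of reduced ambiguous forms.

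\emph{Non-primitive points.} A non-primitive form $f\,[a',b',c']$ gives the same point as the primitive form $[a',b',c']$ of discriminant $D/f^2$. So $D$ satisfies the condition iff $D/f^2$ does for every $f^2\mid D$ with $D/f^2\equiv 0,1 \pmod 4$. If $D$ is odd, $D/f^2$ is odd. The exponent-$2$ condition is inherited, because $\Cl{D}\to\Cl{D/f^2}$ is surjective. The same descent argument settles the case $D=-4$ directly.

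\emph{The table.} Here I would use the known results on class groups of exponent dividing $2$: the idoneal-number classification of Weinberger (1973). It states that, apart from at most one further fundamental discriminant (none under GRH, or under no Siegel zeros in the form needed), the discriminants with $\Cl{D}\cong(\ZZ/2\ZZ)^k$ are the 65 idoneal-type ones, i.e.\ those $-4n$ with $n$ idoneal together with the corresponding odd ones. The plan is:
\begin{itemize}
\item take the fundamental list, including the non-fundamental orders via the class number formula $h(Df^2)=h(D)\,f\prod_{p\mid f}\bigl(1-\bigl(\frac{D}{p}\bigr)p^{-1}\bigr)/[\Or{D}^\times:\Or{Df^2}^\times]$;
\item restrict to odd $D$;
\item check the $2$-rank against the genus count, since the number of genera is $2^{t-1}$ and exponent $2$ means $h=2^{t-1}$.
\end{itemize}
This is a finite computation producing the listed 37 values together with $-4$. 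The unconditional caveat comes from Weinberger's possible exceptional field. One additional fundamental discriminant could also contribute non-fundamental multiples, which is why the statement speaks of discriminants ``arising from'' it.

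The main obstacle is the classification step: ensuring that non-fundamental discriminants are fully captured. This requires bounding $f$. Exponent $2$ forces $h(Df^2)$ to be a power of $2$ equal to the number of genera, and the class number formula grows roughly like $f$. Only small $f$ (essentially $f\le 5$ with $f$ odd) survives, and I would do a short case check on these.
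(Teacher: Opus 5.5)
Your proof of the equivalence is essentially the paper's. The paper also reduces to Lemma \ref{lem:location-of-order-2-CM-pts}: the points of order dividing $2$ are exactly those on the boundary or the imaginary axis. It then excludes the imaginary axis for odd $D$, and handles even $D$ via the identity point $[1,0,-D/4]$. Two small remarks on this part:
\begin{itemize}
\item The paper's $\CM_D$ consists only of primitive triples ($\gcd(a,b,c)=1$), so your paragraph on non-primitive points is unnecessary, though harmless.
\item The identity point is $\sqrt{D}/2=i\sqrt{|D|}/2$, not $i\sqrt{|D|}/4$. With the correct value, your conclusion ``on the boundary iff $D=-4$'' is right.
\end{itemize}

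For the table you take a genuinely different route. The paper bounds the conductor via the Fan--Pollack divisibility $L(f)\mid 12\,|\Or{D_0}^\times|\,\Exp\Cl{D}$ and the inversion $f\mid\theta(L(f))$, then checks the resulting candidates by computer. That method is uniform in the exponent, which is what allows the tables up to $E\le 8$; for $E>2$ genus theory does not pin down $h(D)$. You instead use a feature special to exponent $2$: for odd $D$, $h(D)$ must equal the number of genera $2^{\omega(D)-1}$, and likewise for $D_0$. Set $\psi(f)=f\prod_{p\mid f}(1-(D_0/p)p^{-1})$ and $u=[\Or{D_0}^\times:\Or{D}^\times]$. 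The class number formula then gives $\psi(f)/u=2^{\omega(D)-\omega(D_0)}\le 2^{\omega(f)}$, that is, $\prod_{p^k\parallel f}\psi(p^k)/2\le u\le 3$. This is an elementary bound, and it gives a computer-free proof of this case. Like the paper, it still rests on the conditional classification of fundamental discriminants.

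However, your case check is too small.
\begin{itemize}
\item Carrying the bound out gives $f\in\{3,5,7\}$, not ``essentially $f\le 5$''. For $D_0=-3$ one has $u=3$ and $\psi(7)=6$, so $h(-147)=2$, which equals the number of genera. Hence $-147$ belongs to the table, and restricting to $f\le 5$ would miss it.
\item The remaining non-fundamental entries are $-27$ and $-75$ from $D_0=-3$, and $-99$ and $-315$ from $f=3$ with $(D_0/3)=1$.
\end{itemize}

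Two further corrections:
\begin{itemize}
\item The input list must be Weinberger's list of \emph{fundamental} discriminants with exponent dividing $2$, not the idoneal numbers. Odd $D\equiv 5\pmod 8$, such as $-11,-35,-91$, do not arise from idoneal $n$, because $h(4D)=3h(D)$ for them. Your bulleted plan does start from the fundamental list, so this only affects your description of the source.
\item The table has $37$ entries including $-4$, not $37$ entries plus $-4$.
\end{itemize}
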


In order to show the second half of this theorem, we will need a classification of class groups $\Cl{D}$ of exponent dividing $2$. (Recall here that the exponent $E$ of a group $G$ is the minimal integer $E \ge 1$ such that $g^E = e$ for all $g \in G$.)
The methods we develop to give this classification turn out to also work for any given exponent $E$. So we give a (conditional) classification of all class groups $\Cl{D}$ with exponent $1 \le E \le 8$.  

Such a classification was already given for fundamental discriminants in \cite{fieldexponents}.  
To extend from fundamental discriminants to all discriminants, we use the ideas from a recent work of Fan and Pollack \cite{FanPollack}. In particular, for any given fundamental discriminant $D_0$, we use ideas from \cite{FanPollack} to develop an algorithm to compute the full list of conductors $f$ for $D_0$ such that $\Cl{D_0 f^2}$ has exponent $E$.

Computing discriminants via this algorithm then yields Tables \ref{table:CMexp1} - \ref{table:CMexp8}, listing discriminants with class group of exponent $1 \le E \le 8$. 
The following theorem gives conditions for which one can prove that these tables are complete. (And in the authors' opinion, it is extremely likely that all of these tables are in fact complete.)
\begin{theorem} \label{thm:exp-tables-complete-under-certain-assumptions}
    Tables \ref{table:CMexp1} - \ref{table:CMexp8} list negative discriminants with class group of exponent $1 \le E \le 8$. 
    \begin{enumerate}
        \item Unconditionally, the table for exponent $E=1$ is complete. 
        \item Assuming the non-existence of Siegel zeros, the tables for exponent $E=2,4,8$ are all complete. 
        \item Unconditionally, the tables for exponent $E=2,4,8$ could also contain the discriminants arising from at most one additional fundamental discriminant.
        \item Assuming ERH (Extended Riemann Hypothesis), the tables for exponent $E = 2,3,4,5,8$ are all complete. 
        \item Assuming ERH, the table for $E=6$ is complete if there are no fundamental discriminants $ 3.1\cdot 10^{20} \le |D| \le 2.5 \cdot 10^{25}$ with class groups of exponent $6$.
        \item Assuming ERH, the table for $E=7$ is complete if there are no fundamental discriminants $ 3.1\cdot 10^{20} \le |D| \le 3.9 \cdot 10^{30}$ with class groups of exponents $7$.
    \end{enumerate}
\end{theorem}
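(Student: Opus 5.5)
The proof splits into two independent inputs. The first is the known classification of \emph{fundamental} discriminants whose class group has exponent $E$, which I take from \cite{fieldexponents} and the classical results it rests on. The second is, for each such fundamental $D_0$, a finite computation of the admissible conductors $f$, following \cite{FanPollack}. The statement then follows by observing that if $\Cl{D_0f^2}$ has exponent dividing $E$, then so does $\Cl{D_0}$. The reason is that the natural surjection $\Cl{D_0 f^2}\to \Cl{D_0}$ forces the exponent of the quotient to divide $E$. So every discriminant in the tables arises from a fundamental discriminant $D_0$ whose class group has exponent dividing $E$.

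\textbf{Step 1 (fundamental discriminants).}
\begin{itemize}
\item For $E=1$, the Baker--Heegner--Stark theorem gives the nine class number one fundamental discriminants unconditionally.
\item For $E=2,4,8$, the group $\Cl{D_0}$ is a $2$-group. Genus theory then gives $h(D_0)=2^{\omega(D_0)-1}$, at most a fixed power of $2$. A Siegel-type lower bound $h(D_0)\gg_\varepsilon |D_0|^{1/2-\varepsilon}$ bounds $|D_0|$. Without Siegel zeros this bound is effective, and an explicit computation finishes the list. Unconditionally, the Tatuzawa refinement makes the bound effective for all but at most one exceptional $D_0$. This is the source of the ``one additional fundamental discriminant'' in part (3).
\item For $E=3,5,6,7$ under ERH, I cite the bounds of \cite{fieldexponents}. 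These show that $|D_0|$ is bounded, explicitly enough that a search is complete. The exceptions are $E=6,7$, where the ERH bound exceeds the computed range. This leaves exactly the windows stated in parts (5) and (6).
\end{itemize}

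\textbf{Step 2 (conductors).} For a fixed $D_0$, use the class number formula for orders:
\[
h(D_0f^2)=\frac{h(D_0)\,f}{[\Or{D_0}^\times:\Or{D_0f^2}^\times]}\prod_{p\mid f}\Bigl(1-\Bigl(\tfrac{D_0}{p}\Bigr)\tfrac1p\Bigr),
\]
together with the structure of the kernel of $\Cl{D_0f^2}\to\Cl{D_0}$. This kernel is $(\Or{D_0}/f)^\times/(\ZZ/f)^\times\Or{D_0}^\times$.

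Exponent dividing $E$ forces this kernel to have exponent dividing $E$. Following \cite{FanPollack}, this bounds $f$ as follows:
\begin{itemize}
\item Each prime $p\mid f$ contributes a cyclic factor of order roughly $p-(D_0/p)$. This order must divide $E$, up to units, so only finitely many $p$ occur. For instance, $p\le E+1$ apart from the bounded unit correction when $D_0\in\{-3,-4\}$.
\item The $p$-power part of $f$ contributes a factor of order $p^{k-1}$. This forces $p^{k-1}\mid E$.
\end{itemize}
So $f$ ranges over a finite explicit set. For each candidate $f$ one computes $\Cl{D_0f^2}$ via reduced binary quadratic forms and checks its exponent. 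Running this over all $D_0$ from Step 1 produces Tables \ref{table:CMexp1}--\ref{table:CMexp8}.

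\textbf{Main obstacle.} I expect Step 2 to be routine but fiddly, particularly the unit index for $D_0=-3,-4$ and the prime $2$. The real obstacle is Step 1: making the analytic lower bounds explicit enough that the resulting range of $|D_0|$ is actually covered by computation. Where it is not, as for $E=6,7$, the statement honestly records the residual gap.
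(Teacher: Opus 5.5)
Your architecture matches the paper's. You reduce to fundamental $D_0$ via the surjection $\Cl{D_0f^2}\to\Cl{D_0}$, take the list of such $D_0$ from \cite{fieldexponents}, bound the conductor through the kernel $\PrinCl{D}$ following \cite{FanPollack}, and then compute. The gap is the prime-power bound in Step 2. Because the theorem is a completeness statement for an exhaustive search, this error breaks the proof.

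You treat the $p$-primary factor $(\Or{D_0}/p^k\Or{D_0})^\times/(\ZZ/p^k\ZZ)^\times$ as containing a cyclic piece of order $p^{k-1}$, and conclude $p^{k-1}\mid E$. At $p=2$ this local group need not be cyclic. If $2$ splits in $\QQ(\sqrt{D_0})$, it is isomorphic to $(\ZZ/2^k\ZZ)^\times\cong\ZZ/2\ZZ\times\ZZ/2^{k-2}\ZZ$, whose exponent is only $2^{k-2}$ for $k\ge 3$; the same phenomenon occurs when $2$ is inert. The paper's Table 2 contradicts your criterion:
\begin{itemize}
\item $-448=-7\cdot 8^2$ and $-960=-15\cdot 8^2$ ($2$ split), and $-192=-3\cdot 8^2$ ($2$ inert), all have class group of exponent $2$.
\item For $-448$ one can check directly: the four reduced forms $[1,0,112]$, $[7,0,16]$, $[4,4,29]$, $[11,6,11]$ are all ambiguous.
\item Yet $2^{3-1}=4\nmid 2$, so your rule excludes $f=8$ in each case.
\end{itemize}
So your candidate set of conductors is not a superset of the true conductors, and your search would produce incomplete tables. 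There is also a smaller omission. The local analysis controls $\Exp\PreCl{D}$, which divides $|U_f|\cdot\Exp\PrinCl{D}$. Your prime-power bound drops this unit factor for $D_0=-3,-4$.

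The missing ingredient is exactly the factor $12$ in \cite[Proposition 2.3]{FanPollack]}: $L(f)\mid 12\cdot\Exp\PreCl{D}$, where $L(f)=\lcm\{p^k(1-\lrp{\frac{D_0}{p}}\frac1p): p^k\parallel f\}$. The $12$ absorbs the possible non-cyclicity of the local factors at $2$ and $3$. Chain this with $\Exp\PreCl{D}\mid|\Or{D_0}^\times|\cdot\Exp\PrinCl{D}$ and $\Exp\PrinCl{D}\mid\Exp\Cl{D}$. This gives the paper's criterion $L(f)\mid 12\,|\Or{D_0}^\times|\,E$, and then every admissible $f$ divides $\theta(m)$ for some $m\mid 12\,|\Or{D_0}^\times|\,E$.

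Your prime-level claim is correct as stated. $\PreCl{D_0p^2}$ is cyclic of order $p-\lrp{\frac{D_0}{p}}$ and is a quotient of $\PreCl{D}$, so only the prime-power exponent needs weakening.

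There is also a minor slip in Step 1. For $E=4,8$, genus theory gives $h(D_0)\le E^{\omega(D_0)-1}$, not $h(D_0)=2^{\omega(D_0)-1}$. This is not a fixed power of $2$, since $\omega(D_0)$ is not bounded a priori. You need $\omega(D_0)\ll\log|D_0|/\log\log|D_0|$ to get $h(D_0)=|D_0|^{o(1)}$ before invoking Siegel--Tatuzawa. Since you, like the paper, ultimately take this step from \cite{fieldexponents}, the slip does no harm.
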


Now, we would like to point out three different motivations for this project investigating CM points on the boundary of $\FF$. 
First, this project is a natural variation of Duke's celebrated result in \cite{duke} concerning the equidistribution of CM points over $\FF$. Theorem \ref{thm:equidistribution-boundary} here gives an analogous result about the equidistribution of CM points over the boundary of $\FF$. 
Second, it leads to a classification of all discriminants of small exponent, which is of independent interest (compare, for example, with \cite{fieldexponents}).
Third, the results of this paper were needed in \cite{our-first-paper} to show certain results about transcendence of zeros of modular forms. 
(In fact, this was the original reason why we started to investigate this topic of CM points on the boundary of $\FF$.) 
It turns out that in certain cases, if a modular form has a non-transcendental zero, then it has zeros at every $\CM_D$ point for some $D$ \cite[Lemma 3.1]{our-first-paper}. But in many scenarios, these modular forms can only have zeros on the boundary of $\FF$. 
For example, this property was shown for Eisenstein series $E_k$ in \cite{rankin-swinnerton-dyer} and for cuspidal projections of products of Eisenstein series $\Delta_{k,\ell} = E_k E_\ell - E_{k+\ell}$ in \cite{reitzes-vulakh-young,xue-zhu}. 
Hence it becomes necessary to classify which $\CM_D$ have all their points on the boundary of $\FF$.

Finally, we give an overview of the structure of the paper. In Section \ref{sec:CMthry}, we give some background, reviewing the basics of CM theory. Then in Section \ref{sec:equidistribution-boundary}, we show Theorem \ref{thm:equidistribution-boundary}  by estimating how many CM points lie in given intervals. In Section \ref{sec:proof-of-thm1.1-(all-CMpts-on-bdry)}, we prove Theorem \ref{thm:all-CMpts-on-bdry} (assuming Theorem \ref{thm:exp-tables-complete-under-certain-assumptions}). Finally in Sections \ref{sec:property-of-conductors} and \ref{sec:algorithm-to-compute-discriminants} we prove Theorem \ref{thm:exp-tables-complete-under-certain-assumptions}. Section \ref{sec:property-of-conductors} uses ideas from \cite{FanPollack} to prove a particular divisibility property (Lemma \ref{lem:property-of-f}) that conductors $f$ must satisfy in order to yield class groups of a given exponent. Then Section \ref{sec:algorithm-to-compute-discriminants} uses this divisibility property to develop an algorithm to compute the complete list of conductors with class group of a given exponent. This algorithm is then used to compute Tables \ref{table:CMexp1} - \ref{table:CMexp8}, listed in the appendix.


\section{Background on CM Theory} \label{sec:CMthry}

In this section, we review the basics of CM theory. The details are all standard, and can be found in Cox \cite{cox_primes_1989}.

Recall that a CM point $\t$ is the solution in $\HH$ of a quadratic equation with integer coefficients $az^2 + bz + c = 0$. Without loss of generality, we impose here the conditions $a\ge1$ and $\gcd(a,b,c) = 1$.
This CM point $\tau$ is uniquely determined by the triple $[a,b,c]$,
so we will often refer to a CM point $\tau$ by its corresponding triple $[a,b,c]$. Here, $D \coloneq b^2-4ac$ is called the \textit{discriminant} of $\tau = [a,b,c]$, and we let $\CM_D$ denote the set of CM points with discriminant $D$. Note that the discriminant $D$ here is necessarily negative and congruent to $0$ or $1$ modulo $4$.

We call two CM points equivalent if they are in the same orbit under the standard action of $\SL_2(\ZZ)$ on $\HH$: $\pttmatrix{p}{q}{r}{s}: \t \mapsto \frac{p\t+q}{r\t+s}$.
It turns out that mapping a CM point $\tau$ to the ideal $\ZZ \oplus \tau \ZZ$ induces a bijection between $\SL_2(\ZZ)$-equivalence classes of $\CM_D$ and the class group $\Cl{D}$. Under this bijection, $\SL_2(\ZZ) \backslash \CM_D$ inherits the group structure of $\Cl{D}$. 
Here, $\Cl{D}$ denotes the class group of the imaginary quadratic order $\mathcal O_D$ of discriminant $D$.

Now, a discriminant $D_0$ is called \textit{fundamental} if it is the discriminant of the quadratic field $\QQ(\sqrt{D_0})$. In this case, $\Or{D_0}$ would be the full ring of integers of $\QQ(\sqrt{D_0})$, and so $\Cl{D_0} =\text{Cl}(\QQ(\sqrt{D_0}))$.
It turns out that every discriminant $D$ can be factored uniquely as $D = D_0f^2$, where $D_0$ is a fundamental discriminant, and $f$ is called the conductor. Under this factorization, for any $\t$ such that $\Or{D_0} = \ZZ \oplus \tau \ZZ$, we have $\Or{D} = \ZZ \oplus f \tau \ZZ$.

Finally, we mention two facts about the group structure of $\SL_2(\ZZ) \backslash \CM_D$. First, the group inverse of the $\CM_D$ point $[a,b,c]$ is $[a,-b,c]$. Second, the $\CM_D$ group identity in $\FF$ is given by
\begin{equation} \label{eqn:identity-CM-pts}
    \tau = \begin{cases}
        [1, 0, -\frac{D}{4}] & \text{if} \quad D \equiv 0 \pmod{4} \\
        [1, 1, -\frac{D-1}{4}] & \text{if} \quad  D \equiv 1 \pmod{4}. \\
    \end{cases}
\end{equation}
Observe that when $D \equiv 0 \pmod{4}$, this point lies on the imaginary axis $\{iy \ :\ y \ge 1\}$, and when $D \equiv 1 \pmod{4}$, it lies on the left boundary $\{-\frac{1}{2} + iy \ :\ y \ge \frac{\sqrt{3}}{2}\}$ of $\FF$.


\section{Proof of Theorem \ref{thm:equidistribution-boundary}} \label{sec:equidistribution-boundary}

In this section, we prove Theorem \ref{thm:equidistribution-boundary}.
We will utilize the following three lemmas, which all follow from standard analytic number theory arguments.
In the following, $a,c,d$ will always denote natural numbers. Additionally, $\phi$ denotes the Euler totient function, $\mu$ denotes the Mobius function, and $\gamma$ denotes the Euler-Mascheroni constant.
\begin{lemma}[{\cite[p. 393]{niven-zuckerman-montgomery}}] \label{lem:count-coprime-range}
    For positive real numbers $T$ and natural numbers $a$,
    \begin{align}
        \#\{c \le T ~:~ (c,a)=1\} = \frac{\phi(a)}{a} T + O(a^{1/4}).
    \end{align}
\end{lemma}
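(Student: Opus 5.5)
We prove the lemma by Möbius inversion over the divisors of $a$. Since $\sum_{d\mid n}\mu(d)$ equals $1$ if $n=1$ and $0$ otherwise, we have
\[
\#\{c\le T : (c,a)=1\} = \sum_{c\le T}\sum_{d\mid (c,a)}\mu(d) = \sum_{d\mid a}\mu(d)\left\lfloor \frac{T}{d}\right\rfloor .
\]
Writing $\lfloor T/d\rfloor = T/d - \{T/d\}$ and using $\sum_{d\mid a}\mu(d)/d = \phi(a)/a$, the count equals
\[
\frac{\phi(a)}{a}\,T - \sum_{d\mid a}\mu(d)\left\{\frac{T}{d}\right\}.
\]
So the whole task is to bound the error term $R(T,a) := \sum_{d\mid a}\mu(d)\{T/d\}$ by $O(a^{1/4})$, uniformly in $T$.

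Only squarefree $d$ contribute, so trivially $|R(T,a)|\le 2^{\omega(a)}$, where $\omega(a)$ is the number of distinct prime factors of $a$. It therefore suffices to show $2^{\omega(a)} \ll a^{1/4}$. This is the standard elementary estimate $2^{\omega(a)}\le C\,a^{1/4}$, obtained by comparing prime by prime. Since $2^{\omega(a)}/a^{1/4}$ is multiplicative, we have
\[
\frac{2^{\omega(a)}}{a^{1/4}} \le \prod_{p\mid a}\frac{2}{p^{1/4}} \le \prod_{p<16}\frac{2}{p^{1/4}} =: C .
\]
Here every prime $p\ge 16$ contributes a factor at most $1$, and higher prime powers only shrink the ratio. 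The primes below $16$ are $2,3,5,7,11,13$, so $C$ is an explicit absolute constant, and the lemma follows.

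None of these steps is a genuine obstacle. The only point needing care is that the error bound must be uniform in $T$, including small $T$ (even $T<1$, where the main term is $O(1)$ anyway), and the argument above gives exactly this. The divisor-counting estimate is the one place where the exponent $1/4$ enters. Any fixed positive exponent would work with a larger constant, and $1/4$ is simply the choice convenient for the later applications in Section \ref{sec:equidistribution-boundary}.
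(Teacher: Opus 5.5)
Your proof is correct: the M\"obius-inversion identity $\#\{c\le T : (c,a)=1\} = \frac{\phi(a)}{a}T - \sum_{d\mid a}\mu(d)\{T/d\}$, the error bound $2^{\omega(a)}$ (uniform in $T>0$), and the prime-by-prime estimate $2^{\omega(a)}\le C a^{1/4}$ with $C=2^6/30030^{1/4}$ are all sound. The paper gives no argument of its own here and only cites Niven--Zuckerman--Montgomery; your argument is the standard proof of exactly this estimate, so it supplies what the citation stands in for.
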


\begin{lemma}[{\cite[p. 399]{niven-zuckerman-montgomery}}] \label{lem:sum-phi}
    For positive real numbers $T$,
    \begin{align}
        \sum_{a \le T} \phi(a) &= \frac{3}{\pi^2} T^2 + O(T \log T).
    \end{align}
\end{lemma}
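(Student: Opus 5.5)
Write $\phi(a)=\sum_{d\mid a}\mu(d)\,\frac{a}{d}$. Summing over $a\le T$ and interchanging the order of summation gives
\[
\sum_{a\le T}\phi(a)=\sum_{d\le T}\mu(d)\sum_{m\le T/d} m .
\]
The inner sum is a sum of consecutive integers, so
\[
\sum_{m\le M} m=\frac{M^2}{2}+O(M).
\]
Substituting this with $M=T/d$ yields
\[
\sum_{a\le T}\phi(a)=\frac{T^2}{2}\sum_{d\le T}\frac{\mu(d)}{d^2}+O\Big(T\sum_{d\le T}\frac1d\Big).
\]

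Next, complete the Möbius series. Since $\sum_{d=1}^\infty \mu(d)/d^2 = 1/\zeta(2) = 6/\pi^2$ and $|\mu(d)|\le 1$, the tail satisfies
\[
\Big|\sum_{d>T}\frac{\mu(d)}{d^2}\Big|\le\sum_{d>T}\frac{1}{d^2}=O(1/T).
\]
So the main term equals $\frac{T^2}{2}\cdot\frac{6}{\pi^2}+O(T)=\frac{3}{\pi^2}T^2+O(T)$.

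For the error term, the harmonic sum is $\sum_{d\le T}1/d=\log T+O(1)$, so the error is $O(T\log T)$. This dominates the $O(T)$ from the tail, and the result follows for $T\ge 2$. For $T<2$ the statement holds after adjusting constants, since both sides are $O(1)$ there.

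None of these steps presents a real obstacle. The only care needed is bookkeeping: the $O(M)$ error in the inner sum must be summed against $|\mu(d)|\le1$ over $d\le T$, which is what produces the logarithm.
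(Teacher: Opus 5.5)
Your argument is correct: it is the standard M\"obius-inversion proof of this classical estimate. The paper gives no proof of its own and simply cites Niven--Zuckerman--Montgomery.

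The one slip is your closing remark about $T<2$. An $O(T\log T)$ bound cannot absorb an $O(1)$ discrepancy near $T=1$, because $T\log T$ vanishes there while $\sum_{a\le 1}\phi(a)-\frac{3}{\pi^2}=1-\frac{3}{\pi^2}\neq 0$. So the lemma must be read for $T\ge 2$ (or with error $O(T\log 2T)$). This is harmless, since the paper only applies the lemma with $T\to\infty$.
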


\begin{lemma} \label{lem:sum-phi-over-a2}
    For positive real numbers $T$,
    \begin{align}
        \sum_{a \le T} \frac{\phi(a)}{a^2} &= \frac{6}{\pi^2} \log T + \gamma_0 + O\lrp{\frac{\log T}{T}} 
    \end{align}
    for a certain constant $\gamma_0$.
\end{lemma}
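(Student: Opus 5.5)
The plan is to write $\phi(a)/a^2 = \frac{1}{a}\sum_{d\mid a}\mu(d)/d$ and swap the order of summation. Putting $a = dm$ gives
\[
\sum_{a\le T}\frac{\phi(a)}{a^2} = \sum_{d\le T}\frac{\mu(d)}{d}\sum_{m\le T/d}\frac{1}{dm} = \sum_{d\le T}\frac{\mu(d)}{d^2}\sum_{m\le T/d}\frac1m .
\]
For the inner sum I will use the standard harmonic estimate $\sum_{m\le x} 1/m = \log x + \gamma + O(1/x)$, valid for $x\ge 1$. Since $d\le T$, we always have $T/d\ge 1$, so the estimate applies to every term.

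Substituting, the sum becomes
\[
\sum_{d\le T}\frac{\mu(d)}{d^2}\Bigl(\log T - \log d + \gamma\Bigr) + O\Bigl(\sum_{d\le T}\frac{1}{d^2}\cdot\frac{d}{T}\Bigr).
\]
The error term is $O\bigl(\frac1T\sum_{d\le T} 1/d\bigr) = O(\log T/T)$, which is exactly the error claimed in the statement. It then remains to complete each of the three Möbius sums to its infinite series and control the tails.

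\begin{itemize}
\item First, $\sum_{d\le T}\mu(d)/d^2 = 1/\zeta(2) + O(1/T) = 6/\pi^2 + O(1/T)$. Multiplying by $\log T$ gives the main term $\frac{6}{\pi^2}\log T$ with error $O(\log T/T)$.
\item Second, $\gamma\sum_{d\le T}\mu(d)/d^2 = 6\gamma/\pi^2 + O(1/T)$.
\item Third, $-\sum_{d\le T}\mu(d)\log d/d^2 = -\sum_{d\ge1}\mu(d)\log d/d^2 + O\bigl(\sum_{d>T}\log d/d^2\bigr)$. The series converges absolutely, and the tail is $O(\log T/T)$.
\end{itemize}

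Collecting terms gives the claim with
\[
\gamma_0 = \frac{6\gamma}{\pi^2} - \sum_{d\ge1}\frac{\mu(d)\log d}{d^2}.
\]
The statement only asserts that some constant exists, so there is no need to identify it further, although it equals $\frac{6}{\pi^2}\bigl(\gamma - \zeta'(2)/\zeta(2)\bigr)$. The main point requiring care is that every tail bound be at most $O(\log T/T)$. The tail bound $\sum_{d>T}\log d/d^2 \ll \log T/T$ follows by comparison with $\int_T^\infty \log t/t^2\,dt$, so nothing here is a genuine obstacle.
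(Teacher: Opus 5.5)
Your proposal is correct and matches the paper's proof step for step. Both expand $\phi(a)/a^2=\frac1a\sum_{d\mid a}\mu(d)/d$, swap the order of summation, apply the harmonic estimate to $\sum_{m\le T/d}1/m$, and complete the Möbius sums, arriving at the same constant $\gamma_0=\sum_{d\ge1}\mu(d)(\gamma-\log d)/d^2$. Your closed form $\gamma_0=\frac{6}{\pi^2}\bigl(\gamma-\zeta'(2)/\zeta(2)\bigr)$ is also correct. One caveat applies equally to your argument and the paper's: absorbing the $O(1/T)$ terms into $O(\log T/T)$ tacitly requires $T$ bounded away from $1$ (say $T\ge 2$).
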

\begin{proof}
    We have 
    \begin{align}
        \sum_{a \le T} \frac{\phi(a)}{a^2} &= \sum_{a \le T} \frac{1}{a} \sum_{d|a} \frac{\mu(d)}{d} = \sum_{d \le T} \frac{\mu(d)}{d} \sum_{\substack{a \le T \\ d|a}} \frac{1}{a} = \sum_{d \le T} \frac{\mu(d)}{d^2} \sum_{a' \le T/d } \frac{1}{a'}  \\
        &= \sum_{d \le T} \frac{\mu(d)}{d^2} \lrp{\log (T/d) + \gamma + O\lrp{\frac{1}{T/d}} }
        \\
        &= (\log T) \sum_{d \le T} \frac{\mu(d)}{d^2} + \sum_{d \le T} \frac{\mu(d)(\gamma-\log d)}{d^2} + \sum_{d \le T} \frac{\mu(d)}{d} O\lrp{\frac1T}
        \\
        &= (\log T)\lrp{\frac{6}{\pi^2} + O\lrp{\frac1T}} + \lrp{\gamma_0 + O\lrp{\frac{\log T}{T}}} + O\lrp{\frac{\log T}{T}} \\
        &= \frac{6}{\pi^2} \log T + \gamma_0 + O\lrp{\frac{\log T}{T}},
    \end{align}
    as desired.
    Note that $\gamma_0$ here is the constant
    $\displaystyle
        \gamma_0 := \sum_{d \ge 1} \frac{\mu(d)(\gamma-\log d)}{d^2}.
    $
\end{proof}

We now use these three lemmas to show that CM points are equidistributed over the boundary of $\FF$.

{
\renewcommand{\thetheorem}{\ref{thm:equidistribution-boundary}}
\begin{theorem} 
    Let $\CM^\textnormal{LB}_{|D| \le \Delta}$ denote the CM points on the left boundary of $\FF$ with absolute discriminant $|D| \le \Delta$, and $\CM^\textnormal{LA}_{|D| \le \Delta}$ denote the CM points on the lower arc of $\FF$ with absolute discriminant $|D| \le \Delta$. Then as $\Delta \to \infty$;
    \begin{enumerate}[leftmargin=*]
        \item $\textnormal{Im\,} \CM^\textnormal{LB}_{|D|\le \Delta}$ is equidistributed over every fixed $[\frac{\sqrt 3}{2}, K]$ via the metric $d\mu = \frac{1}{y}dy$,
        \item $\textnormal{arg\,} \CM^\textnormal{LA}_{|D|\le \Delta}$ is equidistributed over $[\frac{\pi}{2}, \frac{2\pi}{3}]$ via the metric $d\nu = \frac{1}{\sin \theta} \,d\theta$.
    \end{enumerate}
\end{theorem}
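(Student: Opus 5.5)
We count CM points on the boundary explicitly in terms of their triples $[a,b,c]$ and then apply the three lemmas above.

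\textbf{Left boundary.} A point $\tau=[a,b,c]$ lies on $\{-\tfrac12+iy\}$ exactly when $-b/(2a)=-\tfrac12$, i.e.\ $b=a$. Then $\gcd(a,b,c)=\gcd(a,c)=1$, $D=a^2-4ac$, and
\[
y=\frac{\sqrt{|D|}}{2a}=\frac12\sqrt{\frac{4c}{a}-1}.
\]
The condition $y\ge \tfrac{\sqrt3}{2}$ becomes $c\ge a$. So points on the left boundary with $|D|\le\Delta$ and $y\in[\alpha,\beta]$ correspond bijectively to pairs $(a,c)$ with $\gcd(a,c)=1$, $a(4c-a)\le \Delta$, and
\[
\frac{a(4\alpha^2+1)}{4}\le c\le \frac{a(4\beta^2+1)}{4}.
\]
Writing $u(y)=(4y^2+1)/4$, the condition $|D|\le\Delta$ reads $4a^2u(y)\le\Delta$, i.e.\ $a\le \sqrt{\Delta}/(2y)$ up to the slight coupling of $y$ with $a$. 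I would slice by $a$ and, for each fixed $a$, count $c$ in the interval $[a\,u(\alpha),\,a\,\min(u(\beta),\Delta/(4a^2)+1/4)]$ using Lemma~\ref{lem:count-coprime-range}. This gives $\frac{\phi(a)}{a}\cdot a\,(\text{length in }u)+O(a^{1/4})$.

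Summing over $a$ produces a main term of the form
\[
\sum_{a}\phi(a)\,\bigl(u(\min(\beta,\,Y_a))-u(\alpha)\bigr),\qquad Y_a\approx\frac{\sqrt\Delta}{2a}.
\]
For $a\le \sqrt\Delta/(2\beta)$ the full interval is present. Lemma~\ref{lem:sum-phi} then gives
\[
\frac{3}{\pi^2}\cdot\frac{\Delta}{4\beta^2}\,\bigl(u(\beta)-u(\alpha)\bigr)\approx \frac{3\Delta}{4\pi^2}\,\frac{\beta^2-\alpha^2}{\beta^2}.
\]
This is not yet of the form $\log(\beta/\alpha)$, so the cutoff must be handled exactly. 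Better: fix $y$ and integrate. The number of $a$ admissible for a given $y$ is about $\sqrt\Delta/(2y)$, and the density of $c$ per unit $y$ is $\frac{\phi(a)}{a}\cdot 2ay$. The count is therefore
\[
\int_\alpha^\beta 2y\sum_{a\le \sqrt\Delta/(2y)}\phi(a)\,dy\sim\int_\alpha^\beta 2y\cdot\frac{3}{\pi^2}\cdot\frac{\Delta}{4y^2}\,dy=\frac{3\Delta}{2\pi^2}\log\frac{\beta}{\alpha}.
\]
This is exactly proportional to $\int_\alpha^\beta dy/y$, as required. To make this rigorous, I would partition $[\alpha,\beta]$ into short subintervals, bound each from above and below with $a$-cutoffs at the endpoints, and let the mesh go to $0$ after $\Delta\to\infty$. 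The error terms $\sum_{a\ll\sqrt\Delta}a^{1/4}=O(\Delta^{5/8})$ and the $O(T\log T)$ from Lemma~\ref{lem:sum-phi} are $o(\Delta)$. The total count over $[\sqrt3/2,K]$ is likewise $\sim \frac{3\Delta}{2\pi^2}\log\frac{2K}{\sqrt3}$, and the ratio gives equidistribution.

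\textbf{Lower arc.} $\tau$ lies on $|z|=1$ exactly when $a=c$. Then $\gcd(a,b)=1$, $D=b^2-4a^2$, and $\cos\theta=-b/(2a)$ with $0\le b\le a$. Rather than recount, I would apply $\tau\mapsto \tau/(\tau+1)$ (or the appropriate $\SL_2(\ZZ)$-like map) to carry the arc onto $\{-\tfrac12+iy:\tfrac12\le y\le\tfrac{\sqrt3}{2}\}$, matching the paper's remark. Equivalently, repeat the parametrization directly: $y=\sin\theta/(\ldots)$ relates $d\theta/\sin\theta$ to $dy/y$. In the $(a,b)$ variables, for fixed $a$ the number of $b$ coprime to $a$ in an interval of $\cos\theta$-length $\ell$ is $\frac{\phi(a)}{a}\cdot 2a\ell+O(a^{1/4})$. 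The constraint $|D|=4a^2\sin^2\theta\le\Delta$ gives $a\le\sqrt\Delta/(2\sin\theta)$. The count is then
\[
\int 2\sin\theta\cdot\frac{3\Delta}{4\pi^2\sin^2\theta}\,d\theta=\frac{3\Delta}{2\pi^2}\int\frac{d\theta}{\sin\theta},
\]
which is again the desired measure.

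\textbf{Main obstacle.} The main obstacle is handling the coupling between the cutoff in $a$ and the position $y$ (or $\theta$) rigorously. I would handle it with the subinterval sandwich argument, using uniform continuity of $y\mapsto 1/y^2$ on the compact interval, and checking that all error terms remain $o(\Delta)$ uniformly. Lemma~\ref{lem:sum-phi-over-a2} would be the alternative tool if one instead sums over $a$ first with $y$-ranges depending on $a$: it yields the logarithm directly from $\sum_{a}\phi(a)/a^2$ over $a$ between $\sqrt\Delta/(2\beta)$ and $\sqrt\Delta/(2\alpha)$. That route is probably the intended one and avoids the integral heuristic.
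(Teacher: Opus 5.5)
Your proposal is correct. It sets up the same parametrization as the paper: points $[a,a,c]$ with $\gcd(a,c)=1$, $c/a=y^2+\tfrac14$, and the constraint $c\le \Delta/(4a)+a/4$. The difference lies in how you handle the $y$-dependent cutoff in $a$.

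\textbf{The paper's route.} It sums over $a$ exactly. For $a\le\sqrt{\Delta}/(2\beta)$ the full $c$-interval is present. For $\sqrt{\Delta}/(2\beta)<a\le\sqrt{\Delta}/(2\alpha)$ the interval is truncated at $\Delta/(4a)+a/4$. This truncated range produces $\frac{\Delta}{4}\sum\phi(a)/a^2$, which gives the logarithm via Lemma~\ref{lem:sum-phi-over-a2}. The two $\sum\phi(a)$ terms from Lemma~\ref{lem:sum-phi} cancel identically, leaving $\frac{3\Delta}{2\pi^2}\log(\beta/\alpha)+O(\Delta^{5/8})$. This is exactly the ``alternative'' you mention at the end.

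\textbf{Your route.} You freeze the cutoff at the endpoints of short subintervals and sandwich the count between lower and upper Riemann sums for $\frac{3\Delta}{2\pi^2}\int dy/y$. This is rigorous as you describe it. With $N$ subintervals fixed, the errors total $O(N\Delta^{5/8})=o(\Delta)$, and you then let $N\to\infty$. It uses only Lemmas~\ref{lem:count-coprime-range} and~\ref{lem:sum-phi}, and it adapts to any Riemann-integrable test set. However, it yields only an asymptotic with no rate, whereas the paper's exact computation gives a power-saving error term.

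\textbf{Lower arc.} The paper does not recount. It moves the arc to $\{-\tfrac12+iy:\tfrac12\le y\le\tfrac{\sqrt3}{2}\}$ via $z\mapsto -1/(z+1)$, which turns $d\theta/\sin\theta$ into $dy/y$, so a single count handles both parts. Your direct count in $(a,b)$-coordinates, using $|D|=4a^2\sin^2\theta$, also works. One small slip: your suggested map $\tau\mapsto\tau/(\tau+1)$ sends the arc to $\textnormal{Re}\,z=+\tfrac12$, not $-\tfrac12$. Composing it with $z\mapsto z-1$ fixes this and gives exactly the paper's map $-1/(z+1)$.
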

\addtocounter{theorem}{-1}
}
\begin{proof}
    First, observe that under the $\SL_2(\ZZ)$ transformation $z \mapsto \pttmatrix{0}{-1}{1}{1}z = \frac{-1}{z+1}$, the lower arc $\{e^{i\theta} \ :\ \frac{\pi}{2} \le \theta \le \frac{2\pi}{3} \}$ maps bijectively to $\{-\frac{1}{2} + iy \ :\ \frac12  \le y \le \frac{\sqrt{3}}{2}\}$. Additionally this transformation maps $\theta \mapsto y = \frac 12 \frac{\sin \theta}{1+\cos \theta}$, so that the metric $d\nu = \frac{1}{\sin \theta} d\theta$ becomes $d\mu = \frac{1}{y}dy$. So let $\CM^\textnormal{LB*}_{|D| \le \Delta}$ denote the CM points on $\{-\frac{1}{2} + i y : y \ge \frac12\}$ with absolute discriminant $|D| \le \Delta$.
    Then to prove parts (1) and (2), it suffices to show that
    $\textnormal{Im\,} \CM^\textnormal{LB*}_{|D|\le \Delta}$ is equidistributed over every fixed $[\frac{1}{2}, K]$ via the metric $d\mu = \frac{1}{y}dy$.

    Now, it is straightforward to see that
    \begin{align}
        \textnormal{Im\,}\CM^\textnormal{LB*}_{|D|\le \Delta} 
        &= \lrcb{ \textnormal{Im\,} [a,a,c] ~:~ a \ge 1,~ c \ge \frac{a}{2},~ (c,a)=1,~ |D| \le \Delta } \\
        &= \lrcb{ \frac{\sqrt{4ac-a^2}}{2a} ~:~ a \ge 1,~ c \ge \frac{a}{2},~ (c,a)=1,~ 4ac-a^2 \le \Delta }. 
    \end{align}
    Then under the reparametrization $t(y) = y^2+\frac14$, the set $\textnormal{Im\,}\CM^\textnormal{LB*}_{|D|\le \Delta} \cap [\frac{1}{2}, K]$ maps precisely to the set of rationals:
    \begin{align}
        R_{\Delta} := \lrcb{\frac12 \le \frac{c}{a} \le K' ~:~ a\ge 1,~(c,a)=1, ~4ac-a^2 \le \Delta} \qquad\text{(where $K':=K^2+1/4$)}.
    \end{align} 
    Also note that under this reparametrization, $d\mu = \frac{1}{y}\,dy$ becomes $d\mu = \frac{2}{4t-1} \,dt$.
    
    Hence to prove the desired result, it suffices to show that $R_{\Delta}$ is equidistributed over $[\frac12,K']$ via the metric $d\mu = \frac{2}{4t-1} \,dt$.  
    Specifically, this means that for every fixed interval $[X,Y] \subseteq [1,K']$,
    \begin{align}
        \frac{\#\, R_{\Delta} \cap [X,Y]}{\#\,R_{\Delta}} \longrightarrow \frac{\int_X^Y d\mu}{\int_1^{K'} d\mu} = 
        \frac{
            \lrb{\frac{1}{2} \log(4t-1)}_{t=X}^{t=Y} 
        }{
            \lrb{\frac{1}{2} \log(4t-1)}_{t=1}^{t=K'}
        } \qquad \text{as} \qquad \Delta\to\infty.
    \end{align} 
    We will prove this identity by showing that
    \begin{align} \label{eqn:goal-temp1} \tag{$*$}
        \#\, R_{\Delta} \cap [X,Y] = \frac{3\Delta}{2\pi^2} \lrb{\frac12\log(4t-1)}_{t=X}^{t=Y} + O\lrp{\Delta^{5/8}}.
    \end{align}
    
    Now, observe that $4ac-a^2 \le \Delta$ if and only if $c \le \frac{\Delta}{4a}+\frac{a}{4}$. This means that
    \begin{align} 
        \#\, R_{\Delta} \cap [X,Y] 
        &= \sum_{a \ge 1} \#\lrcb{ X \le \frac{c}{a} \le Y ~:~ (c,a)=1,\ 4ac-a^2 \le \Delta} \\
        &= \sum_{a \ge 1} \#\lrcb{ aX \le c \le \min\lrp{aY, \frac{\Delta}{4a}+\frac{a}{4}} ~:~ (c,a)=1}.
        \label{eqn:temp1.1}
    \end{align}
    Next, observe that 
    \begin{align}
        aY \le \frac{\Delta}{4a} + \frac{a}{4} \quad\text{iff}\quad a \le \sqrt{\frac{\Delta}{4Y-1}} \qquad
        \text{and}\qquad
        aX \le \frac{\Delta}{4a} + \frac{a}{4} \quad\text{iff}\quad a \le \sqrt{\frac{\Delta}{4X-1}} .
    \end{align}
    Applying these facts to \eqref{eqn:temp1.1}, we obtain that
    \begin{align} 
        &\#\, R_{\Delta} \cap [X,Y]  \\
        =&\qquad 
        \sum_{1 \le a \le \sqrt{\frac{\Delta}{4Y-1}}} \#\lrcb{ aX \le c \le aY  ~:~ (c,  a)=1}  \\
        &+
        \sum_{\sqrt{\frac{\Delta}{4Y-1}} < a \le \sqrt{\frac{\Delta}{4X-1}}} \#\lrcb{ aX \le c \le  \frac{\Delta}{4a}+\frac{a}{4} ~:~ (c,a)=1} \\
        =&\qquad \sum_{1 \le a \le \sqrt{\frac{\Delta}{4Y-1}}} \lrb{\frac{\phi(a)}{a} \lrp{aY-aX} + O\lrp{a^{1/4}}} 
        \qquad\qquad\qquad \text{(Lemma \ref{lem:count-coprime-range})}\\
        &+
        \sum_{\sqrt{\frac{\Delta}{4Y-1}} < a \le \sqrt{\frac{\Delta}{4X-1}}}  \lrb{\frac{\phi(a)}{a} \lrp{\frac{\Delta}{4a}+\frac{a}{4} - aX} + O\lrp{a^{1/4}} }
        \qquad\text{(Lemma \ref{lem:count-coprime-range})}\\
        =&\qquad (Y-X) \sum_{1 \le a \le \sqrt{\frac{\Delta}{4Y-1}}} \phi(a) 
        \quad+\quad \lrp{\frac14-X} \sum_{\sqrt{\frac{\Delta}{4Y-1}} < a \le \sqrt{\frac{\Delta}{4X-1}}} \phi(a)
        \\
        &+\quad
        \frac{\Delta}{4} \sum_{\sqrt{\frac{\Delta}{4Y-1}} < a \le \sqrt{\frac{\Delta}{4X-1}}}  \frac{\phi(a)}{a^2}  \quad+\quad O\lrp{\sqrt{\Delta} \cdot \Delta^{1/8}} \\
        =&\, (Y-X) \frac{3}{\pi^2} \lrb{\frac{\Delta}{4Y-1}} 
        + \lrp{\frac14-X} \frac{3}{\pi^2}\lrb{\frac{\Delta}{4X-1} - \frac{\Delta}{4Y-1}}
        \quad\text{(Lemma \ref{lem:sum-phi})}
        \\
        &+
        \frac{\Delta}{4} \frac{6}{\pi^2}\lrb{ 
        \log \sqrt{\frac{\Delta}{4X-1}}
        - \log \sqrt{\frac{\Delta}{4Y-1}}
        } + O\lrp{\Delta^{5/8}} \quad \qquad\ \ \text{(Lemma \ref{lem:sum-phi-over-a2})} \\
        =&\, \frac{3\Delta}{\pi^2} \lrb{
            \frac{Y-X}{4Y-1} - \frac{1}{4} +  \frac{X-\frac14}{4Y-1}
        } 
        \\
        &+
        \frac{\Delta}{4} \frac{6}{\pi^2} \lrb{ 
        \frac12 \log (4Y-1) - \frac12 \log (4X-1)
        } + O\lrp{\Delta^{5/8}} \\
        =&\, \frac{3\Delta}{2\pi^2} \lrb{\frac12 \log(4t-1)}_{t=X}^{t=Y} + O\lrp{\Delta^{5/8}},
    \end{align}
    verifying \eqref{eqn:goal-temp1}.
\end{proof}


\section{Proof of Theorem \ref{thm:all-CMpts-on-bdry}} \label{sec:proof-of-thm1.1-(all-CMpts-on-bdry)}

We first give a lemma characterizing the CM points that lie on the boundary of $\FF$ (or on the imaginary axis).

\begin{lemma} \label{lem:location-of-order-2-CM-pts}
    The $\CM_D$ points in $\FF$ lying on the boundary of $\FF$ or on the imaginary axis are precisely those of order dividing $2$. 
    Moreover, $\CM_D$ includes a point on the imaginary axis only when $D \equiv 0 \pmod{4}$.
\end{lemma}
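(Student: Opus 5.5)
We will work with reduced forms. Every $\SL_2(\ZZ)$-class of $\CM_D$ has exactly one representative $\tau=[a,b,c]$ in $\FF$. In form language this is the reduced form: $|b|\le a\le c$, with $b\ge 0$ whenever $|b|=a$ or $a=c$. The position of $\tau=\frac{-b+\sqrt{D}}{2a}$ reads off from the triple:
\begin{itemize}
\item $\tau$ lies on the imaginary axis iff $b=0$;
\item $\tau$ lies on the left boundary $\{-\tfrac12+iy\}$ iff $b=a$;
\item $\tau$ lies on the lower arc $|\tau|=1$ iff $a=c$, since $|\tau|^2=c/a$.
\end{itemize}
We also use the fact recalled in Section \ref{sec:CMthry} that the inverse of the class of $[a,b,c]$ is the class of $[a,-b,c]$. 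Hence a class has order dividing $2$ iff $[a,b,c]$ and $[a,-b,c]$ are $\SL_2(\ZZ)$-equivalent.

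For the first direction, take $\tau=[a,b,c]\in\FF$ on the imaginary axis or the boundary, and show $[a,-b,c]\sim[a,b,c]$ in each case.
\begin{itemize}
\item If $b=0$, the two forms are identical.
\item If $b=a$, then $[a,-a,c]$ is obtained from $[a,a,c]$ by the translation $z\mapsto z+1$. Geometrically, the reflection $-\bar\tau$ of $\tau$ lies on the right line $x=\tfrac12$, which is $\tau+1$.
\item If $a=c$, then $[c,-b,a]=[a,-b,c]$ is obtained from $[a,b,c]$ by $z\mapsto -1/z$. Geometrically, $-\bar\tau=-1/\tau$ on the unit circle.
\end{itemize}
In all three cases the class of $\tau$ equals its inverse, so it has order dividing $2$.

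For the converse, suppose the class of $\tau=[a,b,c]\in\FF$ has order dividing $2$. Then $-\bar\tau$, which is the point $[a,-b,c]$, is $\SL_2(\ZZ)$-equivalent to $\tau$. If $\tau$ is on the imaginary axis we are done. Otherwise $-\bar\tau\neq\tau$. Now use uniqueness of the representative in $\FF$. Since $\tau\in\FF$ is not on the imaginary axis or the boundary, $-\bar\tau$ lies in the closure $\overline{\FF}$, and it lies in $\FF$ itself unless it sits on the excluded right edge. Two distinct points of $\overline{\FF}$ can be $\SL_2(\ZZ)$-equivalent only if both lie on the boundary: they are then paired by $z\mapsto z\pm1$ on the vertical sides or by $z\mapsto-1/z$ on the arc. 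This is the standard fact that no interior point of the fundamental domain is equivalent to another point of its closure. Since $-\bar\tau\ne\tau$ are equivalent, $\tau$ must lie on the boundary of $\FF$.

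Equivalently, in form language one can argue that a reduced form is equivalent to its opposite only if $b=0$, $b=a$, or $a=c$. This follows from uniqueness of reduced forms: if $0<|b|<a<c$, then $[a,-b,c]$ is also reduced and distinct from $[a,b,c]$. This uniqueness step is the only delicate point, and we cite it from Cox \cite{cox_primes_1989}.

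For the last sentence of the lemma, a point on the imaginary axis has $b=0$, so $D=-4ac\equiv0\pmod4$. Conversely, when $D\equiv0\pmod4$ the identity $[1,0,-D/4]$ lies on the axis by \eqref{eqn:identity-CM-pts}, so the condition is also attained.
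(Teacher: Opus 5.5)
Your proof is correct and takes the same route as the paper. Both arguments rest on the same facts: the inverse $[a,-b,c]$ is the reflection $-\bar\tau$, so order dividing $2$ means $\tau$ is $\SL_2(\ZZ)$-equivalent to its reflection, which for a point of $\FF$ happens exactly on the boundary or the imaginary axis, and $b=0$ forces $D=-4ac\equiv 0\pmod 4$. You make explicit the two directions the paper leaves to the reader, via $z\mapsto z+1$, $z\mapsto -1/z$, and uniqueness of reduced forms.
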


\begin{proof}
Recall that the inverse of $[a, b, c]$ is $[a, -b, c]$, and observe that this operation corresponds to reflecting points in $\HH$ across the imaginary axis. This means that a $\CM_D$ point of order dividing $2$ must necessarily be  $\SL_2(\ZZ)$-equivalent to its reflection. Hence such $\CM_D$ points in $\FF$ are precisely those that lie on the boundary of $\FF$ or on the imaginary axis. 

Moreover, note that every $\CM_D$ point on the imaginary axis is of the form $[a,0,c]$. In this case, the discriminant $D = -4ac$ must be congruent to $0$ modulo $4$.
\end{proof}

This lemma allows us to classify all discriminants $D$ satisfying the property that every $\CM_D$ point in $\FF$ lies on the boundary of $\FF$. In particular, we prove Theorem \ref{thm:all-CMpts-on-bdry} here, assuming Theorem \ref{thm:exp-tables-complete-under-certain-assumptions}. Then Theorem \ref{thm:exp-tables-complete-under-certain-assumptions} will be proven in the last two sections of the paper. 

{
\renewcommand{\thetheorem}{\ref{thm:all-CMpts-on-bdry}}
\begin{theorem}
    Consider negative discriminants $D$. Then every $\CM_D$ point in $\FF$ lies on the boundary of $\FF$ if and only if $\Cl{D}$ has exponent dividing $2$ and $D$ is odd or $-4$.

    Conditionally on the non-existence of Siegel zeros, such discriminants $D$ are precisely those given in the following table. Unconditionally, this table could possibly also include the discriminants arising from one additional fundamental discriminant.

    {
    \centering
    \def\arraystretch{1.4}
    \begin{tabular}{ c | c } 
         Class Group & Discriminants \\ 
         \hline
         $\textnormal{\{e\}}$ &  $-3, -4, -7, -11, -19, -27, -43, -67, -163$\\  
         $\ZZ/2\ZZ$ &  $-15, -35, -51, -75, -91, -99, -115, -123, -147, -187, -235, -267, -403, -427$\\  
         $(\ZZ/2\ZZ)^2$ & $-195, -315, -435, -483, -555, -595, -627, -715, -795, -1435$ \\  
         $(\ZZ/2\ZZ)^3$ & $-1155, -1995, -3003, -3315$ \\
    \end{tabular}
    }
\end{theorem}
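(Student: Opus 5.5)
The plan is to prove the equivalence first, using Lemma \ref{lem:location-of-order-2-CM-pts}, and then read off the table from the exponent-$2$ classification in Theorem \ref{thm:exp-tables-complete-under-certain-assumptions}. One subtlety governs the whole argument: the imaginary axis meets $\FF$ only in $\{iy : y \ge 1\}$, which is interior to $\FF$ except for the point $i$ on the lower arc. Lemma \ref{lem:location-of-order-2-CM-pts} says the points of order dividing $2$ are exactly those on the boundary of $\FF$ or on the imaginary axis. So "every $\CM_D$ point in $\FF$ lies on the boundary" is equivalent to two conditions holding together: (a) every class has order dividing $2$, i.e. $\Cl{D}$ has exponent dividing $2$; and (b) no $\CM_D$ point in $\FF$ lies on $\{iy : y>1\}$.

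\emph{Forward direction.} Suppose every point lies on the boundary. Then every class has order dividing $2$, giving (a). If $D \equiv 0 \pmod 4$, the identity point $[1,0,-D/4] = i\sqrt{|D|}/2$ from \eqref{eqn:identity-CM-pts} is in $\FF$. It lies on the boundary only when $\sqrt{|D|}/2 = 1$, i.e. $D=-4$. Hence $D$ is odd or $D=-4$.

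\emph{Converse.} Assume (a). If $D$ is odd, Lemma \ref{lem:location-of-order-2-CM-pts} says no point lies on the imaginary axis, so every point (all of order dividing $2$) lies on the boundary. If $D=-4$, the only point is $i$, which is on the lower arc. This completes the equivalence.

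\emph{The table.} By the equivalence, the table consists of the odd discriminants together with $-4$ appearing in the exponent-$1$ and exponent-$2$ tables (Tables \ref{table:CMexp1}, \ref{table:CMexp2}). Theorem \ref{thm:exp-tables-complete-under-certain-assumptions} parts (1)--(3) supply completeness: it is unconditional for $E=1$, conditional on no Siegel zeros for $E=2$, and unconditionally complete up to one extra fundamental discriminant together with its conductors. The class group structures listed as $(\ZZ/2\ZZ)^r$ follow directly from exponent $\le 2$, and the ranks $r$ can be read off from the class numbers in those tables (or from genus theory).

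The main obstacle is only this bookkeeping: filtering the appendix tables for odd $D$ (plus $-4$) and checking that the listed entries match exactly. The substantive difficulty, namely completeness, is deferred to Theorem \ref{thm:exp-tables-complete-under-certain-assumptions}.
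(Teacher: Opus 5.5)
Your proposal is correct and follows essentially the same route as the paper. The identity point $i\sqrt{|D|}/2$ rules out even $D\neq -4$, Lemma \ref{lem:location-of-order-2-CM-pts} handles odd $D$, and the table is the odd-or-$(-4)$ part of Tables \ref{table:CMexp1}--\ref{table:CMexp2}, with completeness inherited from Theorem \ref{thm:exp-tables-complete-under-certain-assumptions}; your split into conditions (a) and (b), which separates out the point $i$ on the imaginary axis, just spells out what the paper's proof leaves implicit.
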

\addtocounter{theorem}{-1}
}

\begin{proof}   
    The first half of the theorem for even discriminants follows from \eqref{eqn:identity-CM-pts}. In this case, the $\CM_D$ identity $[1, 0, -\frac{D}{4}] = \frac{\sqrt{D}}{2}$ lies on the boundary of $\FF$ only for $D=-4$. And since $|\Cl{-4}| = 1$, $\CM_{-4}$ consists of just this point.

    The first half of the theorem for odd discriminants follows from Lemma \ref{lem:location-of-order-2-CM-pts}. In this case, a $\CM_D$ point in $\FF$ lies on the boundary of $\FF$ if and only if it is of order dividing $2$. This immediately yields the desired result. 

    Finally, the second half of the theorem follows from Theorem \ref{thm:exp-tables-complete-under-certain-assumptions}. Here, the desired table of discriminants comes from the classification of class groups of exponent $1$ and $2$.
\end{proof}


\section{A Divisibility Property for Conductors} \label{sec:property-of-conductors}

In these last two sections, we work towards a proof of Theorem \ref{thm:exp-tables-complete-under-certain-assumptions}. This section uses ideas from Fan and Pollack \cite{FanPollack} to show that conductors with class groups of a given exponent must satisfy a certain divisibility property (Lemma \ref{lem:property-of-f}). 
It turns out that for any given exponent $E$, there are only finitely many conductors $f$ that satisfy this divisibility property.
Hence in the next section, we develop an algorithm to compute the complete list of conductors $f$ associated to a fundamental discriminant $D_0$ such that $\Cl{D_0 f^2}$ has exponent $E$. This algorithm will then be used to construct Tables \ref{table:CMexp1} - \ref{table:CMexp8}.

We now work towards a proof of Lemma \ref{lem:property-of-f}.
Fix a fundamental discriminant $D_0$, and let $D = D_0f^2$ be a discriminant with conductor $f$. The following exposition follows Fan and Pollack \cite{FanPollack}.
Define the \textit{principal part} of the class group as follows:
\[
    \PrinCl{D} \coloneq (\Or{D_0} / f\Or{D_0})^\times / \langle 
    \text{images of units of } \Or{D_0} \text{ and images of integers coprime to } f  \rangle.
\]
In particular, $\PrinCl{D}$ can be identified with a subgroup of $\Cl{D}$. Next, we define the \textit{pre-class group} by
\[
     \PreCl{D} \coloneq (\Or{D_0} / f\Or{D_0})^\times / \langle \text{images of integers coprime to } f \rangle.
\]
It follows from the definitions that
\begin{equation} \label{eqn:pre-prin-correspondance}
    \PreCl{D} / U_f \cong \PrinCl{D}
\end{equation}
where $U_f$ is the image of $\Or{D_0}^\times$ in $\PreCl{D}$. So in particular, $|U_f|$ divides $|\Or{D_0}^\times|$. Note that for imaginary quadratic fields, we have the following formula for $|\Or{D_0}^\times|$:
\[
    |\Or{D_0}^\times| = \begin{cases}
        6 & D_0 = -3 \\
        4 & D_0 = -4 \\
        2 & \text{otherwise}.
    \end{cases}
\]
Now, define the function
\[
    L(f) \coloneq \lcm\lrcb{ p^k\lrp{1 - \lrp{\frac{D_0}{p}} \frac{1}{p}} \ \  \colon\ \  p^k || f}.
\]
We now show the following lemma that bounds $L(f)$ in terms of $\Exp\Cl{D}$. 
This is the divisibility property referenced previously that all conductors $f$ with class group of a given exponent must satisfy.
\begin{lemma} \label{lem:property-of-f}
    Fix a negative discriminant $D = D_0f^2$. Then $L(f)$ divides $12\cdot |\Or{D_0}^\times| \cdot \Exp\Cl{D}$.
\end{lemma}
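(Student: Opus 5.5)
We will bound $L(f)$ through the structure of $\PreCl{D}$, then pass to $\PrinCl{D}$ and finally to $\Cl{D}$. Since $\PrinCl{D}$ embeds in $\Cl{D}$, we have $\Exp\PrinCl{D} \mid \Exp\Cl{D}$. By \eqref{eqn:pre-prin-correspondance}, $\PreCl{D}/U_f \cong \PrinCl{D}$ with $|U_f|$ dividing $|\Or{D_0}^\times|$. So for every $x\in\PreCl{D}$ we get $x^{\Exp\Cl{D}}\in U_f$, and hence $x^{|\Or{D_0}^\times|\cdot\Exp\Cl{D}}=1$. This gives $\Exp\PreCl{D}\mid |\Or{D_0}^\times|\cdot\Exp\Cl{D}$. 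It therefore suffices to show that $L(f)$ divides $12\cdot\Exp\PreCl{D}$.

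Next we localize. By the Chinese remainder theorem, $(\Or{D_0}/f\Or{D_0})^\times\cong\prod_{p^k\| f}(\Or{D_0}/p^k\Or{D_0})^\times$, and the integers coprime to $f$ map onto $\prod(\ZZ/p^k\ZZ)^\times$. Thus $\PreCl{D}\cong\prod_{p^k\|f} G_{p^k}$, where $G_{p^k}:=(\Or{D_0}/p^k)^\times/(\ZZ/p^k)^\times$. This group has order $p^{k}\lrp{1-\lrp{\frac{D_0}{p}}\frac1p}$, by the standard count $|(\Or{D_0}/p^k)^\times| = p^{2k}(1-\frac1p)(1-(\frac{D_0}{p})\frac1p)$ divided by $\phi(p^k)$. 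Since $\Exp\PreCl{D}=\lcm_p \Exp G_{p^k}$, it suffices to show that $|G_{p^k}|$ divides $12\cdot \Exp G_{p^k}$ for each $p$. We will in fact show that $G_{p^k}$ is cyclic for odd $p$, and has cyclic subgroup of index dividing $2$ (or a small power of $2$) for $p=2$; the extra factor $3$ in $12$ is slack.

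For odd $p$, write $G_{p^k}$ as an extension. The reduction map $(\Or{D_0}/p^k)^\times\to(\Or{D_0}/p)^\times$ has kernel $1+p\Or{D_0}$ modulo $p^k$, and the quotient $(\Or{D_0}/p)^\times/\mathbb F_p^\times$ is cyclic of order $p+1$, $p-1$, or $p$ according to whether $p$ is inert, split, or ramified. In the inert case this is the norm-one type quotient $\mathbb F_{p^2}^\times/\mathbb F_p^\times$; in the split case it is $(\mathbb F_p^\times)^2/\text{diagonal}$; in the ramified case it is $\mathbb F_p[\epsilon]^\times/\mathbb F_p^\times$. The kernel part is $(1+p\Or{D_0})/(1+p\ZZ)$ modulo $p^k$. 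Using the $p$-adic logarithm for odd $p$, this is isomorphic to $p\Or{D_0}/(p\ZZ+p^k\Or{D_0})\cong\ZZ/p^{k-1}$, which is cyclic. In the inert and split cases the two pieces have coprime orders, so $G_{p^k}$ is cyclic. In the ramified case both pieces are $p$-groups, and we must check cyclicity directly: for $\pi=\sqrt{D_0}$ or $(1+\sqrt{D_0})/2$-type generators, the element $1+\sqrt{D_0}$ should have order $p^k$ in the quotient. Even if this fails, $G_{p^k}$ is at worst $\ZZ/p\times\ZZ/p^{k-1}$, which would cost a factor of $p$. I expect this ramified case to be the main obstacle. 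I would settle it by computing $(1+\sqrt{D_0})^{p^{j}}$ via the binomial theorem and checking its $\sqrt{D_0}$-coefficient is exactly divisible by $p^{j}$; since $p\mid D_0$, the extra terms are more divisible. This is clean except possibly at $p=3$, which is presumably one reason the constant $12$ includes a factor $3$.

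For $p=2$ the logarithm argument only works on $1+4\Or{D_0}$. So $(1+2\Or{D_0})/(1+2\ZZ)$ has a cyclic subgroup of index dividing $2$ or $4$. Combined with the quotient mod $2$, of order $3$, $1$, or $2$, this gives $|G_{2^k}|\mid 4\,\Exp G_{2^k}$. Putting the primes together yields $L(f)\mid 12\cdot\Exp\PreCl{D}$, and therefore $L(f)\mid 12\cdot|\Or{D_0}^\times|\cdot\Exp\Cl{D}$.
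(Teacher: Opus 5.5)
Your first paragraph is exactly the paper's argument: $\PrinCl{D}$ is a subgroup of $\Cl{D}$, and you pass to $\PreCl{D}$ via the quotient by $U_f$. You genuinely differ on the remaining input $L(f)\mid 12\cdot\Exp\PreCl{D}$. The paper simply quotes Fan and Pollack's Proposition 2.3. You instead prove it: you split $\PreCl{D}\cong\prod_{p^k\parallel f}G_{p^k}$, note that $|G_{p^k}|=p^{k-1}\bigl(p-\bigl(\frac{D_0}{p}\bigr)\bigr)$ so that $L(f)=\lcm_p|G_{p^k}|$, and check $|G_{p^k}|\mid 12\,\Exp G_{p^k}$ one prime at a time. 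This route is sound, self-contained, and shows where the constant $12$ comes from ($4$ from $p=2$, $3$ from $p=3$). Two steps need correcting, though.

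\textbf{The factor $3$ is not slack.} $G_{3^k}$ need not be cyclic. Suppose $3$ ramifies and $\QQ_3(\sqrt{D_0})=\QQ_3(\sqrt{-3})$, e.g.\ $D_0=-3,-39,-84$. Then $\Or{D_0}\otimes\ZZ_3$ contains $\zeta_3\equiv 1$ modulo its maximal ideal, and $G_{3^k}\cong\ZZ/3\ZZ\times\ZZ/3^{k-1}\ZZ$. Concretely, for $D_0=-39$ the $\sqrt{-39}$-coefficient of $(1+b\sqrt{-39})^3$ is $3b(1-13b^2)\equiv 0 \pmod 9$ for every $b$, so $G_9\cong(\ZZ/3\ZZ)^2$. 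Your fallback handles this case exactly: the logarithm is still an isomorphism on $1+3\Or{D_0}\otimes\ZZ_3$ when $3$ ramifies, so the kernel is cyclic of order $3^{k-1}$ and $|G_{3^k}|\mid 3\,\Exp G_{3^k}$. For $p\ge 5$ your binomial check does give cyclicity, since $v_p\bigl(\binom{p^j}{2m+1}D_0^m\bigr)\ge j+m-v_p(2m+1)>j$ for all $m\ge 1$. This inequality fails only for $p=3$, $m=1$.

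\textbf{At $p=2$, ``index dividing $2$ or $4$'' does not give your conclusion.} In the ramified case, an index of $4$ inside the kernel $K$, times the residual factor $2$, would only yield $|G_{2^k}|\mid 8\,\Exp G_{2^k}$. Pin the index down by counting. We have $|K|=2^{k-1}$. For $k\ge 2$, the logarithm identifies the image of $1+4\Or{D_0}$ in $K$ with $4\Or{D_0}/(4\ZZ+2^k\Or{D_0})\cong\ZZ/2^{k-2}\ZZ$. So the index is exactly $2$, which gives $|G_{2^k}|\mid 4\,\Exp G_{2^k}$ in the inert, split and ramified cases alike. With these two repairs, $|G_{p^k}|\mid 12\,\Exp G_{p^k}$ holds for every $p$, and your argument is complete.
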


\begin{proof}
    First, by \cite[Proposition 2.3]{FanPollack}, we have
    \[
        L(f) \ \bigg|\  12\cdot \Exp \PreCl{D}.
    \]
    Next, as a consequence of \eqref{eqn:pre-prin-correspondance}, we have
    \[
        \Exp\PreCl{D} \ \bigg|\  |U_f| \cdot \Exp\PrinCl{D} \ \bigg| \ |\Or{D_0}^\times| \cdot  \Exp\PrinCl{D}.
    \]
    Finally, since $\PrinCl{D}$ is a subgroup of $\Cl{D}$,
    \[
        \Exp\PrinCl{D} \ \bigg|\  \Exp\Cl{D}.
    \]
    Combining these three statements,
    \[
        L(f) \ \bigg|\  12\cdot |\Or{D_0}^\times| \cdot \Exp\Cl{D},
    \]
    as desired.
\end{proof}

Finally, for any fixed exponent $E$, we take note of a method to determine which $f$ satisfy the divisibility property $L(f) \mid  (12\cdot |\Or{D_0}^\times| \cdot E)$ from Lemma \ref{lem:property-of-f}. This problem reduces to the question of determining which $f$ can take on a given value of $L(f)$. And this question is answered by the following lemma, essentially amounting to a way to invert the function $f \mapsto L(f)$. 

\begin{lemma} \label{lem:f-divides-L(theta(f))}
    Let $\displaystyle
    \theta(m) \coloneq \prod_{L(p^k)| m} p^k$. Then
    $f$ divides $\theta(L(f))$.
\end{lemma}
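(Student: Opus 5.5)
The plan is to reduce the divisibility $f \mid \theta(L(f))$ to a statement about each exact prime power dividing $f$. Write the factorization $f = \prod_{i} p_i^{k_i}$ with $p_i^{k_i} \,\|\, f$. Since the prime powers $p_i^{k_i}$ are pairwise coprime, $f$ divides $\theta(L(f))$ as soon as each $p_i^{k_i}$ divides $\theta(L(f))$. Recall that $\theta(m) = \prod_{L(p^k)\mid m} p^k$, where the product runs over all prime powers $p^k$ (with $k\ge 1$) satisfying $L(p^k) \mid m$.

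The key observation concerns a single prime power $p^k$. By the definition of $L$ applied to $f = p^k$, we have $L(p^k) = p^k\lrp{1 - \lrp{\frac{D_0}{p}}\frac1p}$. This quantity is exactly the term contributed by $p_i^{k_i}$ in the lcm defining $L(f)$. Hence, for each $i$, we get $L(p_i^{k_i}) \mid L(f)$, because every term of an lcm divides the lcm.

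From this, the prime power $p_i^{k_i}$ is one of the factors in the product defining $\theta(L(f))$. So $p_i^{k_i} \mid \theta(L(f))$ directly, since all the factors in that product are positive integers.

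The only point requiring care, which is the main (minor) obstacle, is that $\theta(m)$ as written multiplies over all prime powers $p^k$ with $L(p^k)\mid m$, including several powers of the same prime. I will read it literally, as a product over pairs $(p,k)$. In that reading, $p_i^{k_i}$ appears as a factor, so the divisibility is immediate. If instead one interprets $\theta(m)$ as taking only the largest such power of each prime, the argument is unchanged: $p_i^{k_i}$ divides the largest power $p_i^{k}$ with $L(p_i^k)\mid m$, because $k_i$ is one of the admissible exponents. In either reading we also need $L(f)$ to be a positive integer, so that $\theta(L(f))$ is defined as a finite product. This holds because each term $p^{k-1}\lrp{p-\lrp{\frac{D_0}{p}}}$ is a positive integer. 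Finiteness of $\theta(m)$ follows from $L(p^k)\ge p^{k-1}(p-1)\to\infty$.

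Combining the per-prime divisibilities by coprimality then gives $f \mid \theta(L(f))$.
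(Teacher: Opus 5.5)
Your proof is correct and takes the same approach as the paper, which notes that the lemma follows immediately from $L(p^k)\mid L(f)$ for each $p^k \parallel f$; each such $p^k$ then divides $\theta(L(f))$, and coprimality of the distinct prime powers gives $f \mid \theta(L(f))$. Your remarks on the two readings of $\theta$ and on the positivity and finiteness of $L(p^k)$ only make explicit what the paper leaves implicit.
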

This lemma follows immediately from the fact that $L(p^k) \mid  L(f)$ for each $p^k \parallel f$.

\section{An algorithm to compute discriminants} \label{sec:algorithm-to-compute-discriminants}

Fix $1 \le E \le 8$. Then employing Lemmas \ref{lem:property-of-f} and \ref{lem:f-divides-L(theta(f))}, one can use the following algorithm to compute the (conditionally) complete list of discriminants $D$ such that $\Cl{D}$ has exponent $E$.

\begin{itemize}
    \item First, compute the fundamental discriminants $D_0$ with class group of exponent dividing $E$. (It suffices to check these $D_0$, since $\Cl{D_0 f^2}$ surjects onto $\Cl{D_0}$ for all conductors $f$.) 
    \begin{itemize}
        \item[$\circ$] To know how high to compute, one can use  \cite[Theorem 1]{fieldexponents}, which (conditionally) gives the largest fundamental discriminant $D_0$ with class group of exponent dividing $E$.
    \end{itemize}
    \item Then for each fundamental discriminant $D_0$, compute the complete list of $f$ candidates: those satisfying the divisibility property $L(f) \mid (12 \cdot |\Or{D_0}^\times| \cdot E)$ from Lemma \ref{lem:property-of-f}.
    \begin{itemize}
        \item[$\circ$] By Lemma \ref{lem:f-divides-L(theta(f))}, $f \mid \theta(L(f))$. So to determine the complete list of $f$ candidates, one only needs to check the divisors of $\theta(m)$ for $m \mid (12 \cdot |\Or{D_0}^\times| \cdot E)$.
    \end{itemize}
    \item For each $f$ candidate, check if the exponent of $\Cl{D_0 f^2}$ is $E$.
    \begin{itemize}
        \item[$\circ$] To check very large $f$ candidates, it is most efficient to first verify that the exponent of $\Cl{D_0f'^2}$ divides $E$ for small divisors $f' \mid f$ (since for $f' \mid f$, $\Cl{D_0 f^2}$ surjects onto $\Cl{D_0 f'^2}$ by \cite[Corollary 7.17]{cox_primes_1989}). This optimization quickly excludes the vast majority of $f$ candidates.
    \end{itemize}
\end{itemize}

For exponents $1 \le E \le 8$, we used this algorithm to compute Tables \ref{table:CMexp1} - \ref{table:CMexp8} (see \cite{ross-code} for the code). This completes the proof of Theorem \ref{thm:exp-tables-complete-under-certain-assumptions}.
Observe that to compute fundamental discriminants $D_0$ with $\Cl{D_0}$ of exponent dividing $E$, the above algorithm uses the classifications from \cite[Theorem 1]{fieldexponents}. The completeness of each of these classifications depends on the conditions listed in Theorem \ref{thm:exp-tables-complete-under-certain-assumptions} (see \cite[Theorem 2]{fieldexponents}). And in fact, this is only reason we need such conditions; the rest of the algorithm is based on the unconditional results about conductors developed in this paper.

\section*{Acknowledgments}
This research was supported by NSF grant DMS-2349174. Hui Xue is supported by Simons Foundation grant MPS-TSM-00007911.

\appendix
\section{Tables of Discriminants} \label{appendix:tables}

This appendix contains the tables of discriminants we computed with class groups of small exponent.
For sake of space, the complete list of discriminants is omitted from Tables \ref{table:CMexp4}, \ref{table:CMexp6}, and \ref{table:CMexp8}. The full data can be found at \cite[Tables 1-8]{ross-code}.

\begin{table}[H]
\caption{Discriminants with class group of exponent 1.}
\begin{center}
\textbf{This table contains $13$ discriminants, with largest value $-163$. \\ \vspace{2mm}} 
\begin{tabular}{ c | p{125mm} } 
    \label{table:CMexp1}
    Class Group & Discriminants \\ 
    \hline
    $\{\textnormal{e}\}$ & $-3$, $-4$, $-7$, $-8$, $-11$, $-12$, $-16$, $-19$, $-27$, $-28$, $-43$, $-67$, $-163$ \\
    \hline
\end{tabular}
\end{center}
\end{table}

\begin{table}[H]
\caption{Discriminants with class group of exponent 2.}
\begin{center}
\textbf{This table contains $88$ discriminants, with largest value $-7392$. \\ \vspace{2mm}}
\begin{tabular}{ c | p{125mm} } 
    \label{table:CMexp2}
    Class Group & Discriminants \\ 
    \hline
    $\ZZ/2\ZZ$ & $-15$, $-20$, $-24$, $-32$, $-35$, $-36$, $-40$, $-48$, $-51$, $-52$, $-60$, $-64$, $-72$, $-75$, $-88$, $-91$, $-99$, $-100$, $-112$, $-115$, $-123$, $-147$, $-148$, $-187$, $-232$, $-235$, $-267$, $-403$, $-427$ \\
    \hline
    $(\ZZ/2\ZZ)^{2}$ & $-84$, $-96$, $-120$, $-132$, $-160$, $-168$, $-180$, $-192$, $-195$, $-228$, $-240$, $-280$, $-288$, $-312$, $-315$, $-340$, $-352$, $-372$, $-408$, $-435$, $-448$, $-483$, $-520$, $-532$, $-555$, $-595$, $-627$, $-708$, $-715$, $-760$, $-795$, $-928$, $-1012$, $-1435$ \\
    \hline
    $(\ZZ/2\ZZ)^{3}$ & $-420$, $-480$, $-660$, $-672$, $-840$, $-960$, $-1092$, $-1120$, $-1155$, $-1248$, $-1320$, $-1380$, $-1428$, $-1540$, $-1632$, $-1848$, $-1995$, $-2080$, $-3003$, $-3040$, $-3315$ \\
    \hline
    $(\ZZ/2\ZZ)^{4}$ & $-3360$, $-5280$, $-5460$, $-7392$ \\
    \hline
\end{tabular}
\end{center}
\end{table}

\begin{table}[H]
\caption{Discriminants with class group of exponent 3.}
\begin{center}
\textbf{This table contains $29$ discriminants, with largest value $-4027$. \\ \vspace{2mm}}
\begin{tabular}{ c | p{125mm} } 
    \label{table:CMexp3}
    Class Group & Discriminants \\ 
    \hline
    $\ZZ/3\ZZ$ & $-23$, $-31$, $-44$, $-59$, $-76$, $-83$, $-92$, $-107$, $-108$, $-124$, $-139$, $-172$, $-211$, $-243$, $-268$, $-283$, $-307$, $-331$, $-379$, $-499$, $-547$, $-643$, $-652$, $-883$, $-907$ \\
    \hline
    $(\ZZ/3\ZZ)^{2}$ & $-972$, $-1228$, $-2188$, $-4027$ \\
    \hline
\end{tabular}
\end{center}
\end{table}

\begin{table}[H]
\caption{Discriminants with class group of exponent 4.}
\begin{center}
\textbf{This table contains $485$ discriminants, with largest value $-887040$. \\ \vspace{2mm}}
\begin{tabular}{  c }
    \label{table:CMexp4}
    See \cite[Table 4]{ross-code}. 
\end{tabular}
\end{center}
\end{table}

\begin{table}[H]
\caption{Discriminants with class group of exponent 5.}
\begin{center}
\textbf{This table contains $31$ discriminants, with largest value $-37363$. \\ \vspace{2mm}}
\begin{tabular}{ c | p{125mm} } 
    \label{table:CMexp5}
    Class Group & Discriminants \\ 
    \hline
    $\ZZ/5\ZZ$ & $-47$, $-79$, $-103$, $-127$, $-131$, $-179$, $-188$, $-227$, $-316$, $-347$, $-412$, $-443$, $-508$, $-523$, $-571$, $-619$, $-683$, $-691$, $-739$, $-787$, $-947$, $-1051$, $-1123$, $-1723$, $-1747$, $-1867$, $-2203$, $-2347$, $-2683$ \\
    \hline
    $(\ZZ/5\ZZ)^{2}$ & $-12451$, $-37363$ \\
    \hline
\end{tabular}
\end{center}
\end{table}

\begin{table}[H]
\caption{Discriminants with class group of exponent 6.}
\begin{center}
\textbf{This table contains $1236$ discriminants, with largest value $-43522752$. \\ \vspace{2mm}}
\begin{tabular}{  c }
    \label{table:CMexp6}
    See \cite[Table 6]{ross-code}. 
\end{tabular}
\end{center}
\end{table}

\begin{table}[H]
\caption{Discriminants with class group of exponent 7.}
\begin{center}
\textbf{This table contains $40$ discriminants, with largest value $-118843$. \\ \vspace{2mm}}
\begin{tabular}{ c | p{125mm} } 
    \label{table:CMexp7}
    Class Group & Discriminants \\ 
    \hline
    $\ZZ/7\ZZ$ & $-71$, $-151$, $-223$, $-251$, $-284$, $-343$, $-463$, $-467$, $-487$, $-587$, $-604$, $-811$, $-827$, $-859$, $-892$, $-1163$, $-1171$, $-1372$, $-1483$, $-1523$, $-1627$, $-1787$, $-1852$, $-1948$, $-1987$, $-2011$, $-2083$, $-2179$, $-2251$, $-2467$, $-2707$, $-3019$, $-3067$, $-3187$, $-3907$, $-4603$, $-5107$, $-5923$ \\
    \hline
    $(\ZZ/7\ZZ)^{2}$ & $-63499$, $-118843$ \\
    \hline
\end{tabular}
\end{center}
\end{table}

\begin{table}[H]
\caption{Discriminants with class group of exponent 8.}
\begin{center}
\textbf{This table contains $2329$ discriminants, with largest value $-1723802080$. \\ \vspace{2mm}}
\begin{tabular}{  c }
    \label{table:CMexp8}
    See \cite[Table 8]{ross-code}. 
\end{tabular}
\end{center}
\end{table}


\bibliographystyle{plain}
\bibliography{biblio2}

\end{document}